\renewcommand{\r}{\rho}
\newcommand{\g}{\gamma}
\newcommand{\s}{\sigma}
\newcommand{\be}{\begin{equation}}
\newcommand{\ee}{\end{equation}}
\newtheorem{theorem}{Theorem}[section]
\newtheorem{corollary}[theorem]{Corollary}
\newtheorem*{hyp*}{Hypothesis}
\newtheorem*{hypA*}{The Alternative Hypothesis}
\newtheorem*{con*}{Conjecture}
\newtheorem*{cor*}{Corollary}
\theoremstyle{definition}
\theoremstyle{remark}
\begin{document}
 
\title[Consecutive moderate  gaps between zeta zeros]{Consecutive moderate gaps between zeros of the Riemann zeta function}

\author{Steven M. Gonek}
\address{Department of Mathematics, University of Rochester, Rochester, NY 14627, USA}
\email{\url{steven.gonek@rochester.edu}}

\author{Anurag Sahay}
\address{Department of Mathematics, Purdue University, West Lafayette, IN 47907, USA}
\email{\url{anuragsahay@purdue.edu}}

\begin{abstract}
Let $0<\g_1\leq \g_2 \leq   \cdots $ denote the ordinates of nontrivial zeros of the Riemann zeta function with positive imaginary parts.
 For $c>0$ fixed (but possibly small),   $T$ large, and $\g_n\leq T$, we call a gap $\g_{n+1}-\g_n$ between consecutive ordinates ``moderate''  if $\g_{n+1}-\g_n \geq 2\pi c/\log T$. 
We investigate whether infinitely often  there exists  $r$ consecutive moderate gaps between ordinates $\g_{n+1}-\g_n, \g_{n+2}-\g_{n+1}, \ldots , \g_{n+r}- \g_{n+r-1}$. 
\end{abstract}
\maketitle

\section{Introduction}\label{Sec Intro}
Our present knowledge of the statistics of the nontrivial zeros of the Riemann zeta function, $\zeta(s)$, has a curious feature. On the one hand, we have conjectures that allow us to  provisionally provide precise answers to a wide variety of  questions about the zeros. For example,  we expect all the nontrivial zeros to be on the critical line $\Re s = 1/2$ (the Riemann Hypothesis). As another example,  we expect the  $n$-level correlations of zeros high up on the critical line to be the same as the  $n$-level correlations of the eigenvalues of $N\times N$ random matrices  from the gaussian unitary ensemble (with $N$ depending on how high up the critical line we are). This is the so-called GUE Hypothesis or Montgomery--Odlyzko law. Thus, a reasonable conjecture for many questions about the statistics of the zeros of $\zeta(s)$ can often be found by working out the analogue for GUE matrices. 

On the other hand, rigorous results about the zeros generally fall far short of our conjectural predictions, even under the assumption of the Riemann Hypothesis. For example, the answer to the relatively straightforward question ``are there infinitely many  gaps between consecutive zeros of the zeta function that are less than half  the average gap size?" is not known. Indeed, if it were answered affirmatively in a  strong enough quantitative form, it would have deep implications for arithmetic by eliminating the hypothetical Siegel zero for quadratic Dirichlet $L$-functions \cite{CI}. 

The goal of this  note is to discuss another  natural, simple-looking problem about zero statistics, where the gap between what we expect to be true and what we can prove is  large. To state it, we
number the nontrivial zeros of the zeta function with positive imaginary parts as
$ \rho_n = \beta_n + i \gamma_n, $
with $0<\g_1 \leq \g_2\leq \cdots$, where if two or more zeros have the same ordinate, we list them arbitrarily (among themselves). 
As is well known,
\be\label{zero ct}
N(T) =\sum_{0<\g_n\leq T} 1=\frac{T}{2\pi} \log \frac{T}{2\pi}-\frac{T}{2\pi} +O(\log T).
\ee
It follows from this  that the average gap between two consecutive ordinates of zeros in the interval $[0, T]$ is $2\pi/\log T$,
and that there are positive constants $C$ and $c$ such that every interval $[T, T+C]$ with $T$ large contains a pair of consecutive ordinates $\g_n, \g_{n+1}$ with $\g_{n+1}-\g_n\geq   2\pi c/\log T$. 
We refer to such a gap  as a ``moderate gap''.  
This observation is classical and has frequently been used  to show that one can choose a sequence of heights $T\to\infty$ between two consecutive ordinates $\g_n <\g_{n+1}$ such that  $\min(\g_{n+1} -T , T-\g_n)\gg 1/\log T$.  This then allows one to show, for example, that on the segment
$-1\leq \s\leq 2$ we have  $\zeta'/\zeta(\s+iT)\ll (\log T)^2$ (see the beginning of \S\ref{Sec: 3}). 

The question we wish to investigate is the following:\
how frequently are there $r+1$ consecutive ordinates $\g_n, \g_{n+1}, \ldots, \g_{n+r} \in(0, T]$ such that the $r$ consecutive gaps 
$\g_{n+1}-\g_n, \g_{n+2}-\g_{n+1}, \ldots, \g_{n+r}-\g_{n+r-1} $ are all greater than or equal to  $2\pi c/\log T$ for some fixed positive $c$?

Notice that the problem of consecutive small gaps, that is, gaps that are each less than    $2\pi c/\log T$ in size,  would to some extent be subsumed by strong upper bounds for  $r$-gaps  between zeros, that is, for the differences $\gamma_{n+r} - \gamma_n$ (see, for example, \cite{CT-B} and references therein). For example, if we knew that $\gamma_{n+2} - \gamma_n \leq 2\pi c/\log T$, then $\gamma_{n+1} - \gamma_{n}$ and $\gamma_{n+2} - \gamma_{n+1}$ would each be $<2\pi  c/\log T$.
The complementary question about consecutive moderate gaps is not so easily reduced. For instance,  if $\gamma_{n+2} - \gamma_{n}$ is large, it could still be the case that one of $\gamma_{n+2} - \gamma_{n+1}$ or $\gamma_{n+1} - \gamma_{n}$ is small. 

Most  of our results assume widely believed hypotheses about the zeros of $\zeta(s)$ such as the Riemann Hypothesis or the Pair Correlation Conjecture. Indeed, we are unable to establish unconditionally that there are infinitely often two consecutive  gaps of size $\geq 2\pi c/\log T$ for any $c>0$. In fact, for all we know unconditionally, every gap of size  $\geq 2\pi c/\log T$ might be followed by a gap of size $<2\pi c/\log T$, or even by a gap of size zero (coming from multiple zeros). 

In the next section we derive answers to our  main question based on several different hypotheses. 
In \S\ref{Sec: 3} we prove an unconditional result related to our question and in \S\ref{Sec: 4} we present an application of our results.

\section{Conditional results} \label{Sec: 2}

\subsection{The Alternative Hypothesis}
One of the strongest well known hypotheses we can bring to bear on our question is also one of the least believable, namely the Alternative Hypothesis. Several forms appear in the literature. The one we use is due to S. Baluyot~\cite{Baluyot} (see \cite{FGL} for a stronger form and see \cite{RV} and references therein for a generalization of AH to higher correlations).
\begin{hyp*}[Alternative Hypothesis]
For each nonnegative integer $n$ there is an integer $k_n$ such that
\[
\frac{\g_{n+1}}{2\pi}\log \g_{n+1} - \frac{\g_{n}}{2\pi} \log \g_{n} = \frac12  k_n +O(|\g_{n+1}-\g_{n}| \ \psi(\g_{n})),
\]
where $\psi(\g)$ is a function such that $\psi(\g)\to \infty$ and $\psi(\g)=o(\log \g)$ as $\g\to\infty$.
\end{hyp*}
 
Let $B_{k/2}(T)$ be the set of zeros $0<\g_n\leq T$ such that $k/2$ is closest among all half-integers to 
$  (\g_{n+1}\log \g_{n+1})/2\pi -  (\g_{n} \log \g_{n})/2\pi$ with the convention that
\[ 
\max \Big\{\frac{k}{2}-\frac 14, 0 \Big\} \leq \frac{\g_{n+1}\log \g_{n+1}}{2\pi } -  \frac{\g_{n} \log \g_{n}}{2\pi  }< \frac{k}{2}+\frac 14.
\]
Also define
\[
p_{k/2}(T) =\frac{1}{N(T)}\sum_{\g_n\in B_{k/2}(T)} 1.
\]
S. Baluyot~\cite{Baluyot} has shown that if Riemann Hypothesis and the Alternative Hypothesis are true and all zeros of $\zeta(s)$ are simple, then as
$T\to\infty$, 
\[ p_0(T)=o(1).
\]
From this it  follows that 
\[
\frac{\g_{n+1}}{2\pi}\log \g_{n+1} - \frac{\g_{n}}{2\pi} \log \g_{n} \geq \frac14   +o(1)
\]
for   all but $o(N(T))$ of the  $\g_n\in(0, T].$ Consequently, we have
\[
 \g_{n+1} -  \g_{n} \geq \frac{ \pi}{2\log T} (1+o(1))   
\]
for  all but $o(N(T))$ of the  $\g_n\in(0, T]$. Thus, for any fixed integer $r\geq 1$, any $\epsilon>0$, and for $T$ sufficiently large, we can find   
$(r+1)$-tuples  of ordinates $(\g_n, \g_{n+1}, \ldots, \g_{n+r})$ with $\g_n \in(0, T]$ such that the $r$ consecutive gaps 
$\g_{n+1}-\g_n, \g_{n+2}-\g_{n+1}, \ldots, \g_{n+r}-\g_{n+r-1} $ are each greater than or equal to  $(\pi/2-\epsilon)/\log T $.

\subsection{The GUE Hypothesis} \label{sec: GUE}

We now discuss what is likely to be the ultimate truth concerning our question, namely what the GUE Hypothesis \cite{M, O} predicts. We first recall the hypothesis.

\begin{hyp*}[GUE Hypothesis]
For each integer $n \geq 1$, the $n$-level correlations of the sequence $\{\gamma_m \frac{\log T}{2\pi} : \gamma_m \leq T\}$ converge as $T \to \infty$ to the limiting $n$-level correlations of the random matrices in the GUE ensemble in the large dimension limit.
\end{hyp*}

We refer the reader to \cite{RS} for a more precise statement of the hypothesis. The limiting $n$-level correlations for GUE are known to be described by the \emph{sine-kernel} point process. The reader may consult \cite{AR} for a description of the conjecture along these lines.

Before discussing   $r$ consecutive gaps between zeros, we first consider the case $r = 1$. That is, 
 we want to know the proportion of   $\gamma_n \in (0,T]$,
 for which  $\gamma_{n+1} - \gamma_n \geq 2\pi c/\log T$. This is tantamount to understanding
\begin{equation} \label{prob} \lim_{T \to \infty} \frac{1}{N(T)}\#\{ \gamma_n \in (0,T] : \gamma_{n+1} - \gamma_n \leq 2\pi c/\log T \}. \end{equation}

The quantity \eqref{prob} is well-studied in the literature under the name \emph{nearest neighbor spacing},  (see, for e.g., \cite{O}). Under the GUE Hypothesis, a concrete representation for \eqref{prob} can be given in terms of a Fredholm determinant, which takes the shape
\[ \int_0^c p_2(0,t) dt \]
where $p_2(0,t) = \frac{d^2}{dt^2} \det(I-Q_t)$, and  where $Q_t$ is a particular trace-class operator on $L^2(-1,1)$. (The subscript in $p_2(0,t)$ simply refers to GUE.) 
If the $0$ here is replaced by $\ell-1$ for $\ell \geq 1$, then a similar (but more cumbersome) formula is obtained for 
\begin{equation} \label{prob2} \lim_{T \to \infty} \frac{1}{N(T)}\#\{ \gamma_n \in (0,T] : \gamma_{n+\ell} - \gamma_n \leq 2\pi c_\ell/\log T \}. \end{equation}
For details, see \cite{Me} or \cite{dCM}. 
 Computing the quantities \eqref{prob2} for $1 \leq \ell \leq r$ is essentially understanding the individual probability distributions of $r$ random variables, namely the normalized limiting distributions of $\gamma_{n+\ell} - \gamma_n$. To understand the quantity of interest to us, however, we would need to know the \emph{joint} distribution of these random variables, that is, the quantity

\begin{equation} \label{jprob} \lim_{T \to \infty} \frac{1}{N(T)}\#\{ \gamma_n \in (0,T] : \gamma_{n+\ell} - \gamma_n \leq 2\pi c_\ell/\log T \text{ for every } 1 \leq \ell \leq r \}. \end{equation}

It would be interesting  to describe \eqref{jprob} using Fredholm deteminants, but   we will not pursue this here. An adequate understanding of \eqref{jprob} should show that for every $c > 0$, a positive proportion of zeros have $r$ consecutive moderate gaps of size $\geq 2\pi c/\log T$ after them. 

\subsection{The Pair Correlation Conjecture} \label{sec: pcc}

We first recall  Montgomery's Pair Correlation Conjecture~\cite{M}.

\begin{con*}[Pair Correlation Conjecture]
Let
\be\label{f alpha}
 f(\alpha) = \int_0^\alpha\bigg(1 - \Big(\frac{\sin\pi u}{\pi u}\Big)^2 \bigg) du. 
 \ee
Then for $c>0$ we have
\[ \frac{1}{N(T)}\sum_{\substack{0 <\gamma_m,\gamma_n \leq T\\0<\gamma_n - \gamma_m < 2\pi c/\log T} } 1 = f(c)+o(1). \]
\end{con*}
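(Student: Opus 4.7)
The plan is to follow Montgomery's Fourier-analytic approach. Assuming the Riemann Hypothesis, set
\[ F(\alpha) = F(\alpha, T) = \frac{2\pi}{T\log T}\sum_{0<\g,\g'\leq T} T^{i\alpha(\g-\g')} w(\g-\g'), \qquad w(u) = \frac{4}{4+u^2}. \]
By Fourier inversion, for any even Schwartz test function $r$ with Fourier transform $\hat r$,
\[ \sum_{0<\g,\g'\leq T} r\!\left(\frac{(\g-\g')\log T}{2\pi}\right) w(\g-\g') = \frac{T\log T}{2\pi}\int_{-\infty}^{\infty} \hat r(\alpha) F(\alpha) d\alpha. \]
The strategy is to evaluate the right-hand side from the behavior of $F$, take $r$ approximating the indicator $\mathbf{1}_{[-c,c]}$, peel off the diagonal $\g = \g'$ contribution, remove the Lorentzian weight (harmless, since $|\g-\g'|\ll(\log T)^{-1}$ forces $w = 1 + O((\log T)^{-2})$), and halve to convert the symmetric count to the one-sided count in the conjecture.

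The first step computes $F(\alpha)$ for $|\alpha|\leq 1$. Under RH, Montgomery's explicit formula expresses the kernel $\sum_{\g} T^{i\alpha\g}x^{i\g}/(1+(t-\g)^2)$ as a weighted sum over prime powers. Squaring and integrating in $t$ yields a second moment of a Dirichlet polynomial over primes, to which the Montgomery--Vaughan mean value theorem applies, producing
\[ F(\alpha, T) = (1+o(1))T^{-2|\alpha|}\log T + |\alpha| + o(1) \]
uniformly for $|\alpha|\leq 1-\epsilon$, with a suitable extension through $|\alpha| = 1$.

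The second step is the conjectural input: $F(\alpha, T) = 1 + o(1)$ uniformly on bounded intervals of $|\alpha|>1$. Granting this, split the Parseval integral at $|\alpha| = 1$. The $T^{-2|\alpha|}\log T$ piece of $F$ concentrates at $\alpha = 0$ and contributes $\hat r(0) = \int r$; combining this with the $|\alpha|$ piece on $[-1,1]$ and the constant $1$ on $|\alpha|>1$, and applying Plancherel against $\widehat{(\sin\pi u/\pi u)^2}(\alpha) = (1-|\alpha|)_+$, rearranges the integral into $\int r(u)(1 - (\sin\pi u/\pi u)^2)du + r(0)$; the $r(0)$ term precisely absorbs the diagonal $\g = \g'$ contribution on the left. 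Sandwiching $\mathbf{1}_{[-c,c]}$ between smooth $r$'s, and halving to remove the symmetric double counting, yields the desired $f(c) + o(1)$.

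The main obstacle is the second step. Controlling $F(\alpha, T)$ for $|\alpha|>1$ is tantamount to deep additive information about primes in short intervals; Bogomolny and Keating derived the conjectural shape of $F$ in this range from the Hardy--Littlewood $k$-tuples conjecture, but no rigorous handle is available. Without such input, Montgomery's method establishes the conjecture only for test functions whose Fourier transforms are supported in $[-1,1]$, which is insufficient for the sharp indicator $\mathbf{1}_{[0,c]}$ whose Fourier transform is supported on all of $\R$. This is precisely why the Pair Correlation Conjecture remains open.
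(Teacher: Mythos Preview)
This statement is not a theorem in the paper but a \emph{conjecture} (note the \verb|con*| environment): the paper states it as Montgomery's Pair Correlation Conjecture and then \emph{assumes} it as a hypothesis in Theorem~\ref{Thm: pcc}. There is no proof in the paper to compare against, because the statement is open.

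Your write-up is not really a proof proposal either, and you say as much in your final paragraph: you outline Montgomery's Fourier-analytic framework, isolate the step where one needs $F(\alpha,T)=1+o(1)$ for $|\alpha|>1$, and correctly explain that this is the unproven input. That diagnosis is accurate. The only comment is one of framing: since the paper treats this as an assumed conjecture rather than a result to be established, a ``proof'' is neither expected nor possible here, and your exposition would be better labeled as motivation or heuristic derivation rather than a proof attempt.
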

 
 This conjecture follows from the GUE hypothesis, namely, it is the special case of looking only at the $2$-level correlations.
 
  Define
\[
g_{n+j} =\g_{n+j+1}-\g_{n+j}\qquad \quad (j=0, 1, 2, \ldots) 
\]
and set
\[
N_r(T, c)=\sum_{\substack{  0<\g_n\leq T \\ g_n,\ g_{n+1},\ \ldots,\ g_{n+r-1} \geq 2\pi c/\log T }} 1.
\]
For $j= 1, 2, \ldots$ let
\[
S_j(T,c) = \Big\{  0<\g_n\leq T : g_n,\ g_{n+1},\ \ldots,\ g_{n+j-1} \geq \frac{2\pi c}{\log T}\;\hbox{and}\; g_{n+j}<    \frac{ 2\pi c}{\log T} \Big\}.
\]
Then  the sets $S_j(T,c)$ and $S_k(T,c)$ are disjoint when $j\neq k$, and for $r\geq 1$ we clearly have
\be\label{N_r}
N_r(T, c)=N(T) -\sum_{j=1}^r |S_j(T, c)|. 
\ee

\begin{theorem} \label{Thm: pcc}
Assume the Riemann Hypothesis and the Pair Correlation Conjecture. Then for any fixed integer $r \geq 1$ and $c > 0$, we have 
\be\label{Nr lwr bd 1}
N_r(T, c) \geq (1 - rf(c) + o(1)) \ N(T) 
 \ee
as $T \to \infty$.
\end{theorem}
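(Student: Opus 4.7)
The plan is to exploit the decomposition \eqref{N_r} and bound each term $|S_j(T,c)|$ on its right-hand side via the Pair Correlation Conjecture. Concretely, it suffices to establish
\[
|S_j(T,c)| \leq (f(c) + o(1))\, N(T) \qquad (1 \leq j \leq r),
\]
since summing over $j$ and substituting into \eqref{N_r} then gives \eqref{Nr lwr bd 1} directly.

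The case $j = 1$ is the cleanest. Each $\g_n \in S_1(T,c)$ corresponds to a pair of consecutive ordinates $(\g_n, \g_{n+1})$ with $0 < \g_{n+1} - \g_n < 2\pi c/\log T$; in particular $\g_{n+1} \leq T + 1$ for $T$ large. These consecutive pairs form a subset of the pairs counted by the left-hand side of the Pair Correlation Conjecture at height $T+1$, which, combined with $N(T+1) \sim N(T)$ from \eqref{zero ct}, yields the desired bound on $|S_1(T,c)|$.

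For $j \geq 2$ I would use an index shift. If $\g_n \in S_j(T,c)$, set $m = n+j$: then $g_m < 2\pi c/\log T$, so $(\g_m, \g_{m+1})$ is once more a consecutive pair with small gap, and the correspondence $n \mapsto m$ is injective. From $\g_n \leq T$ we obtain $m \leq N(T) + r$, and \eqref{zero ct} implies that any unit interval near height $T$ contains $O(\log T)$ zeros, so $\g_{m+1} \leq T + 1$ for $T$ sufficiently large (uniformly in $j \leq r$). The Pair Correlation Conjecture at height $T + 1$ then bounds the number of such consecutive pairs, and hence $|S_j(T,c)|$, by $(f(c) + o(1))N(T)$.

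Summing the bounds over $j = 1, \ldots, r$ and invoking \eqref{N_r} yields \eqref{Nr lwr bd 1}. The only genuinely technical step is the index-shift argument for $j \geq 2$, which requires bounding the image ordinates by $T + O(1)$ so that PCC can be applied; this is routine because $r$ is held fixed and the local zero density is $O(\log T)$, so no additional ingredient beyond \eqref{zero ct} and the Pair Correlation Conjecture is needed.
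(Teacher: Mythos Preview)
Your overall strategy --- bound each $|S_j(T,c)|$ by $(f(c)+o(1))N(T)$ via the Pair Correlation Conjecture and substitute into \eqref{N_r} --- is exactly the paper's approach, and the index shift $n\mapsto n+j$ is the right device. Two points need repair, however.

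First, your description of the $j=1$ case is off: by the definition of $S_1(T,c)$ the gap $g_n=\gamma_{n+1}-\gamma_n$ is \emph{at least} $2\pi c/\log T$, and it is $g_{n+1}=\gamma_{n+2}-\gamma_{n+1}$ that is small. So there is nothing special about $j=1$; the same shift argument you give for $j\ge 2$ covers it. Second, and more substantively, when you pass to the shifted pair $(\gamma_m,\gamma_{m+1})$ you have only $0\le \gamma_{m+1}-\gamma_m<2\pi c/\log T$, not a strict lower inequality, because nothing so far rules out repeated ordinates. The Pair Correlation Conjecture as stated counts only pairs with $0<\gamma_n-\gamma_m$, so you must separately dispose of the contribution from $\gamma_{m+1}=\gamma_m$. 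The paper does this by observing that the Pair Correlation Conjecture itself forces almost all zeros to be simple, whence the multiple-ordinate contribution is $o(N(T))$; you should insert this step before invoking PCC.
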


\begin{proof} 
If $r$ is fixed, $\gamma_n \leq T$, and  $T$ is sufficiently large, then  $\g_n, \ldots, \gamma_{n+r} \leq T+1$ and we see that
\begin{equation} \notag
|S_j(T, c)| \leq \sum_{\substack{0 \leq \gamma_n \leq T+1 \\ 0<\gamma_{n+1} - \gamma_n < \frac{2\pi c}{\log T}}} 1 + \sum_{\substack{0 \leq \gamma_n \leq T+1 \\ \gamma_{n+1} = \gamma_n}} 1.
 \end{equation}
This is an overcount since we are ignoring the restrictions $S_j(T, c)$ places on $g_{n}, g_{n+1}, \ldots, g_{n+j-1}$. 
Now the Pair Correlation Conjecture implies that almost all zeros (in the sense of density) are simple, so the second sum on the right-hand side  is $o(N(T))$.
Thus, 
\begin{equation} \notag
|S_j(T, c)| \leq \sum_{\substack{0 \leq \g_m, \gamma_n \leq T+1 \\ 0<\gamma_{n } - \gamma_m < \frac{2\pi c}{\log T}}} 1 + o(N(T)) .
\end{equation}
 By the Pair Correlation Conjecture again, we see that
\begin{equation} \notag
|S_j(T, c)| \leq  (f(c)+ o(1) ) N(T).
\end{equation}
Combining this with \eqref{N_r}, we obtain the theorem.
\end{proof}

\begin{corollary}
Assume the Riemann Hypothesis and the Pair Correlation Conjecture. Then for any fixed integer $r \geq 1$, there exist numbers $c > 0$  for which a positive proportion of the $0<\g_n\leq T$ give rise to $r$-tuples of consecutive gaps $g_n, g_{n+1}, \ldots , g_{n+r-1}$, 
each of length $\geq 2\pi c/\log T$. Moreover, this holds any $c \leq 1/\pi$ when $1 \leq r \leq 28$ and for any $c < (3/\pi)^{2/3}r^{-1/3}$ when $r>28$. 
\end{corollary}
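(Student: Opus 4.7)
The plan is to apply Theorem~\ref{Thm: pcc} directly and reduce the corollary to an explicit upper bound on $f(c)$. The theorem gives
\[ N_r(T, c) \geq (1 - rf(c) + o(1))\, N(T), \]
so a positive proportion of ordinates $\gamma_n \in (0, T]$ enjoy $r$ consecutive moderate gaps of size $\geq 2\pi c/\log T$ as soon as $f(c) < 1/r$. Hence it suffices to bound $f(c)$ from above by a convenient explicit function of $c$.

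The key input is the elementary inequality
\[ 1 - \Big(\frac{\sin \pi u}{\pi u}\Big)^{\!2} \leq \frac{(\pi u)^2}{3} \qquad (u \geq 0), \]
which I would establish by setting $p(x) = \sin^2 x - x^2 + x^4/3$ and checking that $p(0)=p'(0)=p''(0)=p'''(0)=0$ together with $p^{(4)}(x) = 8(1 - \cos 2x) \geq 0$, from which $p(x) \geq 0$ follows by repeated integration. Inserting this pointwise bound into the integral defining $f(c)$ yields the clean cubic estimate $f(c) \leq \pi^2 c^3 / 9$ valid for every $c > 0$.

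With this in hand, the sufficient condition $f(c) < 1/r$ is guaranteed whenever $\pi^2 c^3 / 9 < 1/r$, i.e.\ whenever $c < (9/(\pi^2 r))^{1/3} = (3/\pi)^{2/3} r^{-1/3}$; this yields the claim in the regime $r > 28$. For $1 \leq r \leq 28$ I would instead just take $c = 1/\pi$ directly, observing that the cubic bound gives $f(1/\pi) \leq 1/(9\pi)$, and the numerical fact $9\pi > 28$ then forces $1/(9\pi) < 1/28 \leq 1/r$. Monotonicity of $f$ extends the conclusion to all $c \leq 1/\pi$. There is no serious obstacle once the cubic bound on $f$ is in hand; the remainder is routine arithmetic, and the cutoff at $r = 28$ enters only through the numerical coincidence $9\pi \approx 28.27$.
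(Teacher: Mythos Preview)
Your proposal is correct and follows essentially the same route as the paper: apply Theorem~\ref{Thm: pcc}, bound the integrand by $(\pi u)^2/3$, integrate to get $f(c)\leq \pi^2 c^3/9$, and read off the threshold $c<(3/\pi)^{2/3}r^{-1/3}$ together with the cutoff at $r=28$ via $9\pi\approx 28.27$. The only minor differences are that you prove the pointwise inequality for all $u\geq 0$ (the paper states it only for $u\leq 1/\pi$, which suffices) and you spell out the $r\leq 28$ case more explicitly; note also that the paper's proof contains the typo $(3/\pi)^{4/3}$ where $(3/\pi)^{2/3}$ is meant.
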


\begin{proof}
The first assertion follows from \eqref{Nr lwr bd 1} on noting that, by \eqref{f alpha}, $f(\alpha)\to 0$ as $\alpha \to 0^+$. 
To deduce  the  second assertion, we integrate the inequality 
\[
1 - \Big(\frac{\sin\pi u}{\pi u}\Big)^2 \leq \frac{(\pi u)^2}{3},
\]
valid for $u\leq 1/\pi$,
over $[0, c]$. This gives the upper bound $f(c)\leq (\pi/3)^2 c^3$ for any $0\leq c\leq 1/\pi$.
Inserting this into \eqref{Nr lwr bd 1}, we find that $N_r(T,c)>0$ when $c< (3/\pi)^{4/3}r^{-1/3}$, $c\leq 1/\pi$, and $T$ is sufficiently large.
\end{proof}

Here is a table of values $c_r$ for various values of $r$ for which $1-r f(c_r)=0$. Thus, if $0<c<c_r$, a positive proportion of the zeros 
$0<\g_n\leq T$ are initial zeros of $r$ consecutive gaps of length $\geq 2\pi c/\log T$, once $T$ is sufficiently large. 
 
\hskip2in
\begin{tabular}{|l | c | }
\hline
$r$& $c_r$ \\ \hline 
$1$ &$1.46389$ \\ \hline 
$2$ & $0.951371$ \\ \hline
$3$ & $0.780111$ \\ \hline
$4$ & $0.68697$ \\ \hline
$5$ & $0.625737$ \\ \hline
$6$ & $0.581289$ \\ \hline
$7$ & $0.546994$ \\ \hline
$8$ & $0.519411$ \\ \hline
$9$ & $0.496551$ \\ \hline
$10$ & $0.477168$ \\ \hline
$20$ & $0.370163$ \\ \hline
$100$ & $0.21138$ \\ \hline
$1000$ & $0.0972135$ \\ \hline
\end{tabular}

%
%

\vskip.3in


\subsection{The Well-Spacing Hypothesis}

In this section we investigate what can be shown under the following Well-Spacing Hypothesis, which is weaker than Montgomery's Pair Correlation Conjecture. The argument from \S\ref{sec: pcc} can be adapted to give a positive proportion of $r$-tuples of consecutive moderate gaps under this hypothesis as well, albeit with smaller values of $c$.

\begin{hyp*}[Well-Spacing Hypothesis] \label{hypothesis}
There exists a real number $\delta>0$ and a positive constant $M$ depending on $\delta$ such that,  uniformly for  $0<u<1$, we have
\[
 \limsup_{T\to\infty}\frac{1}{N(T)}\Big| \Big\{0 < \g_n \leq T:0\leq \g_{n+1}-\g_n <\frac{2\pi u}{\log T}\Big\}\Big| \leq M u^\delta .
\]
\end{hyp*}

The Well-Spacing Hypothesis with $\delta=3$ follows from Montgomery's Pair Correlation Conjecture.  
Notice that Well-Spacing for a given $\delta$ in turn implies it for every smaller positive $\delta$. Note also 
that the Well-Spacing Hypothesis implies that all but $o(N(T))$ of the zeros are simple.

The method used to prove Theorem~\ref{Thm: pcc} may easily be modified to prove the following result.
\begin{theorem} \label{Thm: well-spacing}
Assume the Riemann Hypothesis and that the Well-Spacing Hypothesis holds for $\delta$. Then for $c > 0$ and any fixed integer $r \geq 1$ we have 
\be\notag
N_r(T, c) \geq (1 - rMc^\delta + o(1)) \ N(T) .
 \ee
as $T \to \infty$.
\end{theorem}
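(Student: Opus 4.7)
The plan is to mimic the proof of Theorem~\ref{Thm: pcc} almost verbatim, with the Well-Spacing Hypothesis taking the place of the Pair Correlation Conjecture. Specifically, I would start from the identity \eqref{N_r}, so that proving the theorem amounts to showing
\[
\sum_{j=1}^{r}|S_{j}(T,c)| \leq \bigl(rMc^{\delta}+o(1)\bigr)N(T).
\]
By symmetry in $j$, it suffices to bound a single $|S_{j}(T,c)|$ by $(Mc^{\delta}+o(1))N(T)$ uniformly in $j$, since $r$ is fixed.

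The key step is the same overcount used in Theorem~\ref{Thm: pcc}: I would drop the constraints that $g_n, g_{n+1}, \dots, g_{n+j-1}\geq 2\pi c/\log T$ and retain only the constraint $g_{n+j}<2\pi c/\log T$. After a harmless shift in indexing and enlarging the range to $\gamma_{n}\leq T+1$, this gives
\[
|S_{j}(T,c)| \leq \#\Big\{0<\gamma_{n}\leq T+1 : 0<\gamma_{n+1}-\gamma_{n} <\tfrac{2\pi c}{\log T}\Big\} + \#\{0<\gamma_{n}\leq T+1 : \gamma_{n+1}=\gamma_{n}\}.
\]
For the first term I would apply the Well-Spacing Hypothesis with $u=c$ (assuming $c<1$; otherwise $Mc^{\delta}\geq M$ and one may shrink $c$ or note the bound is vacuous when $rMc^{\delta}\geq 1$), obtaining at most $(Mc^{\delta}+o(1))N(T)$. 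The second term counts pairs of coincident ordinates; since Well-Spacing with $u\to 0^{+}$ forces multiple zeros to contribute $o(N(T))$ as remarked in the text, this term is negligible.

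Summing the resulting bound $|S_{j}(T,c)|\leq (Mc^{\delta}+o(1))N(T)$ over $1\leq j\leq r$ and inserting into \eqref{N_r} yields the claimed inequality. The proof presents no genuine obstacle: the only mild wrinkle is that the Well-Spacing Hypothesis is stated for $u\in(0,1)$, so the theorem is stated (and is only nontrivial) in the range where $c$ and $rMc^{\delta}$ are small enough, and one should note at the outset that Riemann Hypothesis is used only to guarantee that the ordinates are real and the elementary decomposition \eqref{N_r} makes sense in the usual way.
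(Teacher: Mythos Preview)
Your proposal is correct and is precisely the approach the paper intends: the paper does not write out a separate proof of Theorem~\ref{Thm: well-spacing} but simply states that the method used for Theorem~\ref{Thm: pcc} may be modified, and your sketch carries out exactly that modification. The only minor remark is that the Well-Spacing Hypothesis as stated already covers the case $\gamma_{n+1}=\gamma_n$ (it uses $0\leq \gamma_{n+1}-\gamma_n$), so your separate treatment of coincident ordinates, while harmless, is not strictly necessary.
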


It immediately follows from this that under the hypotheses of the theorem,  if $c>0$ is sufficiently small as a function of $r$ and $\delta$, then 
a positive proportion of the ordinates $0<\g_n\leq T$ give rise to $r$-tuples of consecutive gaps $g_n, g_{n+1}, \ldots , g_{n+r-1}$, 
each of length $\geq 2\pi c/\log T$.

\section{An unconditional result} \label{Sec: 3}

If we do not assume any hypothesis about the zeros, it could be the case that for every $c>0$, every  vertical gap between consecutive zeros of size $\geq 2\pi c/\log T$ is followed  by a gap of size $<2\pi c/\log T$. In this section we show that even if this is the case, there are still gaps of moderate size (and even $r$ of them) that do not have many ordinates of zeros between them.
 
\begin{theorem}\label{thm: uncond}
Let $r$ be a positive integer, $T\geq 2$,   and let $\epsilon>0$. Suppose that  $m$ is an integer and  $h=2\pi m/\log T$. If $m$ is sufficiently large and fixed, then for all $T$ sufficiently large, there exists a set $\mathscr T$ of measure $>(1-\epsilon) T$ in $(T, 2T]$ such that for every $t\in \mathscr T$, each of the $r$
intervals $I_j(t) = \big(t+(j-1)h), t+jh \big],\, j=1, 2, \ldots, r,$ contains $<\frac32 m$ ordinates of zeros and each $I_j(t)$ contains at least one consecutive pair of zeros with a gap of length
$\geq 4\pi/(3 \log T)$.
\end{theorem}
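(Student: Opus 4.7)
The plan is to define $\mathscr{T}$ via two typicality conditions coming from Selberg's mean-value theorem, and then combine them by pigeonhole. Write $k_j(t)=N(t+jh)-N(t+(j-1)h)$, and let $A_j(t)$, $B_j(t)$ denote the distances from $t+(j-1)h$ to the next zero on the right and from $t+jh$ to the preceding zero on the left, respectively; when $k_j \geq 1$, the zeros in $I_j(t)$ span exactly $h-A_j(t)-B_j(t)$. Conditions (a) and (b) will then follow from controlling $k_j$ and $A_j + B_j$ in Selberg-type average.

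First, I would use Selberg's classical bound $\int_T^{2T}|S(t+h)-S(t)|^2\,dt \ll T\log(2+h\log T) \ll T\log m$ together with the Riemann--von Mangoldt formula and Chebyshev's inequality to show that, outside a set of $t$ of measure $\ll rT\log m/m^2$, one has $k_j \in [3m/4,\,5m/4]$ for every $j\in\{1,\dots,r\}$. This immediately yields condition (a) ($k_j<\tfrac{3}{2}m$) and, for $m$ large enough, $k_j\geq 2$ (so that $I_j(t)$ has consecutive inner pairs at all).

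Next, I would apply Selberg's estimate again, but at the finer scale $h/12$, to control boundary slack. The key observation is that $A_j(t) \geq h/12$ iff $(t+(j-1)h,\,t+(j-1)h+h/12]$ is zero-free, so Chebyshev applied to the mean-square of $N(s+h/12)-N(s)-m/12$ gives
\[
|\{t\in(T,2T] : A_j(t)\geq h/12\}| \ll \frac{T\log m}{m^2},
\]
and the same bound for $B_j(t)\geq h/12$. Since $\{A_j+B_j>h/6\}\subseteq\{A_j\geq h/12\}\cup\{B_j\geq h/12\}$, the set of $t$ with $A_j+B_j>h/6$ for some $j$ has measure $\ll rT\log m/m^2$.

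Taking $\mathscr{T}$ to be the intersection of the good sets from both steps (so $|\mathscr{T}|>(1-\epsilon)T$ for $m$ large in terms of $r,\epsilon$), the pigeonhole step concludes the proof: the zeros in $I_j(t)$ span at least $h-A_j-B_j\geq 5h/6$, distributed among the $k_j-1\leq 5m/4-1$ consecutive inner gaps, so the largest such inner gap is at least
\[
\frac{5h/6}{5m/4-1} \;>\; \frac{2h}{3m} \;=\; \frac{4\pi}{3\log T},
\]
verifying condition (b). The main delicate point is choosing the correct scale $h/12$ in Step 2 and aligning the constants $3m/4,\,5m/4,\,h/6$ so that the pigeonhole lower bound strictly exceeds $4\pi/(3\log T)$ — a looser control on either $k_j$ or $A_j+B_j$ would leave the final inequality only barely attainable.
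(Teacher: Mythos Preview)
Your proposal is correct and follows the same backbone as the paper's proof: Fujii's second-moment bound $\int_T^{2T}(S(t+h)-S(t))^2\,dt \ll T\log(3+h\log T)$, then Chebyshev to control $k_j(t)=N(t+jh)-N(t+(j-1)h)$, a union bound over $j=1,\dots,r$, and pigeonhole on the gaps. The paper uses the cruder range $m/2<k_j<3m/2$ and then passes directly to ``the average spacing between the $M_j$ ordinates in $I_j(t)$ is $h/(M_j-1)>4\pi/(3\log T)$'' to conclude.

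Where you depart from the paper is in adding a second application of the mean-square bound, at the finer scale $h/12$, to control the boundary slack $A_j+B_j$. This actually addresses a point the paper's write-up glosses over: the $k_j-1$ gaps between consecutive zeros \emph{inside} $I_j(t)$ sum to the span $h-A_j-B_j$, not to $h$, so $h/(k_j-1)$ is only an \emph{upper} bound for the true average internal gap, whereas pigeonhole in the needed direction requires a \emph{lower} bound on the span. Your extra step supplies exactly that, and your tightened range $k_j\in[3m/4,5m/4]$ together with $A_j+B_j\le h/6$ makes the final inequality $(5h/6)/(5m/4-1)>4\pi/(3\log T)$ genuinely airtight. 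The cost is one more Chebyshev estimate and slightly fussier constants; the payoff is that the pigeonhole step is fully justified rather than heuristic.
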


\begin{proof}
Let 
\[
S(t)=\frac1\pi \arg\zeta(\tfrac12+it),
\]
where, if  $t$ is not the ordinate of a zero, the argument is obtained by continuous variation along the  lines connecting $2, 2+it$ and  $\frac12+it$, 
beginning with the value $S(2)=0$. If  $t$ is the ordinate of a zero, we let $S(t)=\lim_{\epsilon\to 0^+}S(t+\epsilon)$.
By Theorem 9.3 of Titchmarsh \cite{T}, 
\be\notag
N(t) =\frac{t}{2\pi} \log \frac{t}{2\pi}-\frac{t}{2\pi} +\frac78 +S(t)+O\Big(\frac{1}{t}\Big)
\ee
as $t\to\infty$. From this we find that, if $T<t\leq 2T$ and $h\ll 1$,   then
\be\label{S-S}
S(t+h)-S(t) = N(t+h)-N(t) -\frac{h}{2\pi} \log \frac{t}{2\pi}+O\Big(\frac{1}{t}\Big).
\ee
Fujii~\cite{F} has shown that 
\be\notag%
\int_{T}^{2T} (S(t+h)-S(t))^2 dt \ll T \log(3+h\log T).
\ee
Thus, by \eqref{S-S},
\be\notag%
\int_{T}^{2T} (N(t+h)-N(t) -h(\log T)/2\pi)^2 dt \ll T \log(3+h\log T).
\ee
Set $h=2\pi m/\log T$ where $m$ is a positive  integer. Then 
\be\notag%
\int_{T}^{2T} (N(t+h)-N(t) -m)^2 dt \ll T \log 2m.
\ee
From this we see that the measure of the set of $t\in(T, 2T]$ for which 
\[
|N(t+h)-N(t) -m| \geq \frac m2
\]
is $O(T(\log 2m)/m^2)$.
Thus,
\be\label{ineq 1}
\frac m2 < N(t+h)-N(t) <\frac{3m}{2}
\ee
on $(T, 2T]$ except for a set of measure $O(T\log 2m/m^2)$. Suppose now that we fix an integer $r$ and let $\epsilon>0$ be small.
Then if $m$ is sufficiently large, \eqref{ineq 1} will hold for all $t\in (T, 2T]$ except for a set of measure $\leq \epsilon T/ r$.
Thus, for such $m$,  there exists a set $\mathscr T$ of $t\in (T, 2T]$ of   measure greater than $(1-\epsilon) T$ for which
we  have
\be\notag
\frac m2 < N(t+jh)-N(t+(j-1)h) <\frac{3m}{2}  
\ee
simultaneously  for $j=1, 2, \ldots , r$. For $t\in \mathscr T$, let $I_j(t)=(t+(j-1)h), t+jh],\, j=1, 2, \ldots, r$.
Let $M_1, M_2, \ldots, M_r$, denote  the number of ordinates of  zeta zeros in these $r$ intervals, so that
$m/2<M_j<3m/2$. The   average spacing between the  $M_j$ ordinates in $I_j(t)$ is then
\[
\frac{h}{M_j-1} >\frac{2\pi m}{M_j \log T} > \frac{4\pi}{3\log T}.
\]
Thus, among the $M_j$ ordinates in $I_j(t)$, there is a gap  between at least one pair of consecutive ones of size
 $\geq 4\pi/(3\log T)$. In other words, given $r$ and $\epsilon>0$, if the integer $m$ is
large enough, then for $T$ sufficiently large, there exists a set $\mathscr T$ of measure $>(1-\epsilon) T$ such that for every $t\in \mathscr T$, each of the $r$
intervals $\big(t+(j-1)h), t+jh \big],\, j=1, 2, \ldots, r$ contains $<\frac32 m$ ordinates of zeros, at least one consecutive pair of which forms a gap of length
$\geq4\pi/(3 \log T)$.
\end{proof}

 \section{An application} \label{Sec: 4}

In the theory of the Riemann zeta function, one often encounters integrals around rectangles 
passing through the critical strip  whose integrands contain  the functions  $\zeta'/\zeta(s)$ or $\xi'/\xi(s)$, where 
$\xi(s)=\frac12 s(s-1)\pi^{-\frac s2}\Gamma(\frac s2)\zeta(s)$. We saw in \S\ref{Sec Intro} that one can find a sequence of $T\to \infty$ so that $\min_{\g} |\g-T|\geq 2\pi c/\log T$ for some positive $c$. 
If  an edge of the rectangle contains a segment such as $[-1+iT, 2+iT]$, 
it then follows from 
the formula 
\be\label{z'/z}
\frac{\zeta'}{\zeta}(s) =\sum_{|t-\gamma|\leq 1} \frac{1}{s-\rho}  +  O(\log |t|)
\ee
for $-1\leq \sigma\leq 2$ (see Davenport~\cite{D}, p.99), that $\zeta'/\zeta(s)\ll (\log T)^2$ on this segment.   
This allows one to bound the contribution of this part of the contour to the overall integral. Equation \eqref{z'/z} also
holds for $\xi'/\xi(s)$, so we also have $\xi'/\xi(s)\ll (\log T)^2$ on this segment.

One may similarly ask for a bound for $\xi''/\xi'(s)$ on such a segment.  This question arises, for example in Farmer,  Lee, and Gonek~\cite{FGL} 
(see (6.14)), where a  bound was stated without proper justification.\footnote{The first author is grateful to Larry Rolen, David de Laat, Zachary Tripp, and  Ian Wagner for pointing this out to him.}
 The following theorem, an application of Theorem~\ref{thm: uncond} of the last section, gives a slightly weaker bound than the one in~\cite{FGL} which is nevertheless sufficient for the argument there.

\begin{theorem}
Assume the Riemann Hypothesis.  If $C>2$, then there is a sequence $T_j\to \infty$ such that $T_{j+1}-T_j\ll 1$ for which
\be\label{Bd to prove 1}
\frac{\xi''}{\xi'}(\s+iT_j)\ll  (\log T_j)^{C+1}
\ee
for  $-1\leq \sigma\leq 2$.  
\end{theorem}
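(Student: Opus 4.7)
The plan is to combine a classical partial fraction expansion for $\xi''/\xi'$ with a careful choice of $T_j$ avoiding the ordinates of zeros of $\xi'$, using Theorem~\ref{thm: uncond} to locate $T_j$ within a controlled short interval at each height. Since $\xi'$ is entire of order one, Hadamard's factorization gives
\[
\xi'(s) = e^{A + Bs}\prod_{\rho^*}\Bigl(1 - \frac{s}{\rho^*}\Bigr) e^{s/\rho^*},
\]
with $\rho^* = \beta^* + i\gamma^*$ ranging over the zeros of $\xi'$. Taking logarithmic derivatives and repeating the classical argument of Davenport~\cite{D} yields, for $-1 \leq \sigma \leq 2$ and $t$ large, the analogue of \eqref{z'/z},
\[
\frac{\xi''}{\xi'}(\sigma + it) = \sum_{|\gamma^* - t| \leq 1} \frac{1}{\sigma + it - \rho^*} + O(\log t),
\]
where the error term uses a Riemann--von Mangoldt style estimate $N_{\xi'}(T+1) - N_{\xi'}(T) \ll \log T$ for zeros of $\xi'$ in a unit interval.

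Next, I apply Theorem~\ref{thm: uncond} with $r = 1$, a small fixed $\epsilon > 0$, and a large fixed integer $m$ to obtain, for each large $T$, a set $\mathscr T \subset (T, 2T]$ of measure $> (1 - \epsilon) T$ such that for every $t \in \mathscr T$ the interval $I(t) = (t, t + h]$, with $h = 2\pi m/\log T$, contains a consecutive pair of $\xi$-ordinates forming a moderate gap of size $\geq 4\pi/(3\log T)$. Since $\mathscr T$ has density close to $1$, one such $t$ lies in essentially every unit subinterval of $(T, 2T]$, so the resulting $T_j$ (to be chosen inside $I(t)$) will automatically satisfy $T_{j+1} - T_j \ll 1$. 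Within each $I(t)$ I seek a $T_j$ with $|T_j - \gamma^*| \geq c(\log T)^{-C}$ for every ordinate $\gamma^*$ of a zero of $\xi'$ lying within distance $1$ of $T_j$. Since there are $\ll \log T$ such ordinates, the total measure of the forbidden neighborhoods is $\ll (\log T) \cdot (\log T)^{-C} = (\log T)^{-(C - 1)}$, whereas $|I(t)| = h = 2\pi m/\log T$. The forbidden set is strictly smaller than $|I(t)|$ exactly when $(\log T)^{2 - C} < 2\pi m$, which for $C > 2$ holds for all $T$ large; this is precisely where the hypothesis $C > 2$ enters.

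Inserting such a $T_j$ into the partial fraction formula yields
\[
\Bigl|\frac{\xi''}{\xi'}(\sigma + iT_j)\Bigr| \ll \sum_{|\gamma^* - T_j| \leq 1} \frac{1}{|T_j - \gamma^*|} + \log T_j \ll (\log T_j) \cdot (\log T_j)^C + \log T_j \ll (\log T_j)^{C + 1},
\]
which is the desired bound. The main obstacle I expect is verifying the partial fraction representation and its reduction to a sum over nearby zeros cleanly, in particular establishing the short-interval counting estimate $N_{\xi'}(T+1) - N_{\xi'}(T) \ll \log T$; modulo this standard (but nontrivial) setup, the argument reduces to measure-theoretic pigeonholing inside the controlled intervals produced by Theorem~\ref{thm: uncond}.
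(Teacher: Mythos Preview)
Your approach is correct in essence but genuinely different from the paper's. The paper never invokes the Hadamard product of $\xi'$; instead it works through the identity
\[
\frac{\xi''}{\xi'}(s)=\frac{\xi'}{\xi}(s)+\frac{(\xi'/\xi)'(s)}{(\xi'/\xi)(s)},
\]
applies Theorem~\ref{thm: uncond} with $r=3$ to obtain three adjacent short intervals each containing a moderate gap $[\g_{2k-1},\g_{2k}]$, and chooses $T_j=\g_3+(\log\g_3)^{-C}$ just above a \emph{zero of $\xi$} in the middle interval. Proximity to $\g_3$ forces $|\xi'/\xi(\s+iT_j)|\gg(\log T_j)^C$, while the flanking moderate gaps and the bound $<\tfrac32 m$ on the number of ordinates per interval control the upper bounds; two cases in $\s$ are then treated separately. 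Your route---Hadamard for $\xi'$, a Davenport-style reduction to nearby zeros $\rho^*$ of $\xi'$, and a pigeonhole choice of $T_j$ avoiding those $\gamma^*$---is more direct and avoids the case split. Under RH the zeros of $\xi'$ interlace with those of $\xi$ on the critical line, which immediately yields $N_{\xi'}(T+1)-N_{\xi'}(T)\ll\log T$; and the anchor bound $\xi''/\xi'(2+it)=O(\log t)$ follows from the displayed identity together with $\xi'/\xi(2+it)=\tfrac12\log t+O(1)$, so your flagged ``main obstacle'' is indeed routine.

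Two points to tighten. First, your appeal to Theorem~\ref{thm: uncond} is superfluous: you never use the moderate gap inside $I(t)$, only the length $|I(t)|=2\pi m/\log T$ for the measure comparison. The same pigeonhole works in every interval $[n,n+1]$, since the forbidden set around the $O(\log T)$ ordinates $\gamma^*$ has measure $O((\log T)^{1-C})=o(1)$; this in fact only needs $C>1$ and immediately gives $T_{j+1}-T_j\ll 1$. Second, as written, ``$\mathscr T$ has measure $>(1-\epsilon)T$'' does \emph{not} guarantee a point of $\mathscr T$ in every unit subinterval (there could be a single gap of length $\epsilon T$), so your deduction of $T_{j+1}-T_j\ll 1$ from that alone is a small gap; dropping Theorem~\ref{thm: uncond} as above fixes it.
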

\begin{proof} We prove the first assertion only as the proof of the second is similar.
Observe that by the functional equation,  $\xi(s)=\xi(1-s)$, we have
\be\notag
\frac{\xi''}{\xi'}(s) =- \frac{\xi''}{\xi'}(1-s). 
\ee
Thus, it suffices to prove \eqref{Bd to prove 1}
for $\frac12\leq \sigma\leq 2$.
We do this by arguing from the easily obtained formula
\be\label{xi'/xi formula} 
\frac{\xi''}{\xi'}(s)= \frac{\xi'}{\xi}(s) +\frac{ ({\xi'}/{\xi})'(s)}{ {\xi'}/{\xi}(s)}.  
\ee

Now, if $s\neq \r$,
\be\label{xi'/xi}
\frac{\xi'}{\xi}(s) =B+ \sum_{\rho }  \Big(\frac{1}{s-\rho}   -\frac 1\rho \Big),
\ee
where $B$ is a constant. Hence,
\be\label{4}
\Big(\frac{\xi'}{\xi}\Big)'(s) = - \sum_{\rho }\frac{1}{(s-\rho)^2}  .
\ee
In particular, if the Riemann Hypothesis is true and $t\neq \gamma$, then
\be\notag
\frac{d}{dt} \Big(\frac{\xi'}{\xi}(\tfrac12+it) \Big)=   \sum_{\rho }\frac{-1}{(t-\gamma)^2} <0 .
\ee
Thus, between two consecutive ordinates, $i {\xi'}/{\xi}(\frac12+it)$   decreases
from infinity to minus infinity, and there is a unique point $\g^*$ in the interval such that $\xi'(\frac12+i\g^*)=0$. Clearly if a bound such as \eqref{Bd to prove 1} is to hold, $T$ cannot be too close to $\g^*$.

 By Theorem~\ref{thm: uncond} with 
 $r=3,\epsilon>0$, and  $h=2\pi m/\log T$ with $m$ and $T$ large enough, there exists a set $\mathscr T$ of measure $>(1-\epsilon) T$ in $(T, 2T]$ such that for every $t\in \mathscr T$, each of the three
intervals $(t, t+h], (t+h, t+2h], (t+2h, t+3h]$ contains $<\frac32 m$ ordinates of zeros, at least one consecutive pair of which forms a gap of length
$\geq(4\pi/3) \log T$. Let $\g_1, \g_2$ be such a pair in the first interval, $\g_3, \g_4$ a pair in the second, and $\g_5, \g_6$ a pair in the third.
The corresponding zeros will be $\r_j=\frac12+i\g_j$.
Let $\g_3^*$ denote the unique point in the middle interval $[\g_3, \g_4]$ such that $\xi'(\frac12+i\g_3^*)=0$. Without loss of generality, we assume that $\g_3^*-\g_3 \geq 2\pi/(3\log T)$, the alternative being that $\g_4-\g_3^* \geq 2\pi/(3\log T)$.

Now let $T_j  =\g_3 +(\log \g_3)^{-C}$ with $C>2$, and  $s=\s +iT_j$. As was mentioned above, we need only bound 
$\xi''/\xi'(\s+iT_j)$  when $\frac12\leq \s\leq 2$. We do this by splitting the interval into  two parts. 
 
\noindent{\bf Case 1.}   Assume $\frac12+ (\log \g_3)^{-C} < \s\leq 2$.

By \eqref{xi'/xi}, and since 
$B=\sum1/\rho$,
\be\label{xi'/xi lower 1}
\begin{split}
\Big|\frac{\xi'}{\xi}(\s+iT_j)\Big| \geq  \Re \frac{\xi'}{\xi}(\s+iT_j)
= & \sum_{\rho } \frac{\s-\frac12}{(\s-\frac12)^2 +(\g- T_j)^2}  
\geq   \frac{\s-\frac12}{(\s-\frac12)^2 +(\g_3- T_j)^2} \\
= &\frac{\s-\frac12}{(\s-1)^2 +(\log \g_3)^{-2C}}.
\end{split}
\ee
Also, by  \eqref{4},  
\be\notag
\begin{split}
\bigg(\frac{\xi'}{\xi}\bigg)^{'}(\s+iT_j) 
= -\sum_{|\g-T_j|\leq 1}   \frac{1}{(\s+iT_j-\rho)^2  }  
-\sum_{|\g-T_j|>1}   \frac{1}{(\s+iT_j-\rho)^2  }.
\end{split}
\ee
Since from \eqref{zero ct} we have $N(T+1)-N(T)\ll \log T$,
we see that
\be\notag%
\bigg(\frac{\xi'}{\xi}\bigg)^{'}(\s+iT_j) \ll  \frac{\log T_j}{(\s-\frac12)^2 }+ \log T_j \ll \frac{\log T_j}{(\s-\frac12)^2 }.
\ee 
It follows from this and \eqref{xi'/xi lower 1} that the second term on the right-hand side of \eqref{xi'/xi formula}  is
\be\notag%
\begin{split}
\frac{ ({\xi'}/{\xi})'(\s+iT_j)}{ {\xi'}/{\xi}(\s+iT_j)} 
\ll &\log T_j \Big(\frac{1}{\s-\frac12} +        \frac{1}{(\s-\frac12)^3(\log \g_3)^{2C}  }    \Big)\\
\ll & \log T_j (\log \g_3)^C \ll (\log T_j)^{C+1}.
\end{split}
\ee
To bound the first term on the right-hand side  in \eqref{xi'/xi formula}, note that 
\[
\frac{\xi'}{\xi}(s) = \frac{\zeta'}{\zeta}(s)+O(\log (|s|+2))
\]
(see (7) and (8) of Davenport~\cite{D}, p.80)
Thus, by \eqref{z'/z}, 
\be\label{xi'/xi 2} 
\frac{\xi'}{\xi}(\s+iT_j) =\sum_{|T_j-\gamma|\leq 1} \frac{1}{\s+iT_j-\rho}  +  O(\log  T_j).
\ee
Hence,
\be\notag
\begin{split}
 \frac{\xi'}{\xi}(\s+iT_j) 
\ll&  \frac{\log T_j}{\s-\frac12  }+ \log T_j 
\ll \frac{\log T_j}{\s-\frac12 }\ll (\log T_j)^{C+1}.
\end{split}
\ee
Combining these estimates in \eqref{xi'/xi formula}, we find that
\be\notag
\frac{\xi''}{\xi'}(\s+iT_j)\ll  \frac{\log T_j}{\s-\frac12  }
\ll ( \log T_j)^{C+1}
\ee
when  $\frac12+ (\log \g_3)^{-C} < \s\leq 2$.

\noindent{\bf Case 2.}   Assume $\frac12\leq  \s\leq\frac12+ (\log \g_3)^{-C} $. 

We first derive a lower bound for $\xi'/\xi(\s+iT_j)$.
By \eqref{xi'/xi 2} 
\be\label{xi'/xi 3}
\begin{split}
 \frac{\xi'}{\xi}(\s+iT_j) 
 =&\sum_{T_j-1\leq \gamma \leq \g_3} \frac{1}{\s+iT_j-\rho} +\sum_{\g_4\leq \g \leq T_j+1} \frac{1}{\s+iT_j-\rho} +  O(\log T_j)\\
 =& S_1 +S_2 +  O(\log T_j),
\end{split}
\ee
say.
In $S_2$, we have $\g-T_j\gg 1/\log T_j$ for each term.  Hence, as there are $O(\log T_j)$ terms,  $S_2\ll (\log T_j)^2$.
We next  write
\[
S_1=\sum_{T_j-1\leq \gamma \leq \g_2} \frac{1}{\s+iT_j-\rho} + \sum_{\g_2<\gamma \leq \g_3} \frac{1}{\s+iT_j-\rho} .
\]
In the first sum  we again have  $\g-T_j\gg 1/\log T_j$ for each term, and there are again $O(\log T_j)$ terms. Thus, this sum is $\ll(\log T_j)^2$. 
Combining our estimates so far with \eqref{xi'/xi 3}, we see that
\be\label{xi'/xi 4}
 \frac{\xi'}{\xi}(\s+iT_j) =  \sum_{\g_2<\gamma \leq \g_3} \frac{1}{\s+iT_j-\rho} +O((\log T_j)^2).
\ee
Now
\be\notag
\begin{split}
\Big|\sum_{\g_2<\gamma \leq \g_3} \frac{1}{\s+iT_j-\rho}\Big|\geq& \Big|\Im \sum_{\g_2<\gamma \leq \g_3} \frac{1}{\s+iT_j-\rho}\Big| 
= \sum_{\g_2<\gamma \leq \g_3} \frac{T_j-\g}{(\s-\frac12)^2+(T_j-\g)^2}\\
 \geq & \frac{T_j-\g_3}{(\s-\frac12)^2+(T_j-\g_3)^2} 
 = \frac{(\log \g_3)^{-C}}{(\log \g_3)^{-2C}+(\log \g_3)^{-2C}}\\
 =&\frac12 (\log \g_3)^{C} \gg (\log T_j)^C.
\end{split}
\ee
Thus, since $C>2$, we deduce from \eqref{xi'/xi 4} that
\be\label{xi'/xi 5}
 \frac{\xi'}{\xi}(\s+iT_j) \gg (\log T_j)^C.
\ee

To obtain an upper bound for $(\xi''/\xi')(\s+iT_j)$ from the formula in \eqref{xi'/xi formula}, 
we also require upper bounds for $(\xi'/\xi)(\s+iT_j)$ and $(\xi'/\xi)'(\s+iT_j)$. Appealing to  
 \eqref{xi'/xi 4}, we observe that there are $\ll m$ terms in the sum and that each is $\ll (\log T_j)^C$. Hence
 \be\label{xi'/xi upper}
  \frac{\xi'}{\xi}(\s+iT_j) \ll  (\log T_j)^{C}.
\ee 
To obtain an upper bound for $(\xi'/\xi)'(\s+iT_j)$, notice that by \eqref{4},
\be\notag
\begin{split}
\Big(\frac{\xi'}{\xi}\Big)^{'}(\s+iT_j) 
=-\sum_{|\g-T_j|\leq 1}   \frac{1}{(\s+iT_j-\rho)^2  }  -\sum_{|\g-T_j|>1}   \frac{1}{(\s+iT_j-\rho)^2  }.
\end{split}
\ee
The second sum is clearly $O( \log T_j)$. In the first sum there are $O(\log T_j)$ terms, each of which is of size at most $O((\log T_j)^{2C})$. 
Thus, altogether we have 
\be\label{der xi'/xi}
 \Big(\frac{\xi'}{\xi}\Big)^{'}(\s+iT_j) \ll (\log T_j)^{2C+1 }.
\ee
We now combine \eqref{xi'/xi 5}-\eqref{der xi'/xi} with \eqref{xi'/xi formula} to find that 
\[
\frac{\xi''}{\xi'}(\s+i T_j)\ll ( \log T_j)^{C+1}
\]
on the interval $\frac12\leq  \s\leq\frac12+ (\log \g_3)^{-C} $. 

Combining the  upper bounds we obtained in Case 1. and Case 2., we see that
\be\notag
\frac{\xi''}{\xi'}(\s+iT_j)\ll ( \log T_j)^{C+1}
\ee
for $\frac12\leq  \s \leq 2$. This completes the proof of the  theorem.
\end{proof}

\subsection*{Acknowledgements}
The first author again wishes to express his gratitude to Larry Rolen, David de Laat, Zachary Tripp, and Ian Wagner. The second author would like to thank David Farmer, Brad Rodgers, and Caroline Turnage-Butterbaugh for helpful discussions. In the course of writing this paper, the second author was partially supported by Purdue University start-up funding available to Trevor Wooley, summer funding from the NSF FRG grant DMS-1854398, and the AMS-Simons Travel Grant. 

\bibliographystyle{plain}

\end{document}